\documentclass[12pt,reqno]{amsart}
\usepackage{enumerate,amsfonts,amsmath,amsthm,amssymb,latexsym,verbatim,amscd,mathrsfs,pb-diagram, graphics,color}
\usepackage{hyperref}
\usepackage{setspace}
\usepackage{geometry}
\allowdisplaybreaks
\geometry{a4paper,left=2cm,right=2cm,top=2cm,bottom=2cm}
\theoremstyle{plain}
\newtheorem{thm}{Theorem}[subsection]

\newtheorem{lem}[thm]{Lemma}
\newtheorem{prop}[thm]{Proposition}

\theoremstyle{definition}

\newtheorem{rem}[thm]{Remark}

\numberwithin{equation}{subsection}

\begin{document}

\title[Integral Bases for twisted multiloop Algebras]{Integral Bases for twisted multiloop Algebras with diagram automorphism actions}\author{Angelo Bianchi} 
\address{Universidade Federal de S\~ao Paulo - UNIFESP - Department of Science and Technology, Brazil}
\email{acbianchi@unifesp.br}
\thanks{A.B. is partially supported by FAPESP grant 2024/14914-9.}

\author{Samuel Chamberlin}
\address{Department of Natural and Physical Sciences\\
Park University\\
Parkville, MO 64152}
\email{schamberlin@park.edu}

\begin{abstract}
We construct integral forms for the universal enveloping algebras of certain twisted multiloop algebras and explicit integral bases for these integral forms.
\end{abstract}

\maketitle

\section*{Introduction}

Let $\mathfrak g$ be a simple finite dimensional complex Lie algebra and $\mathbb C\langle n\rangle:=\mathbb C[t_1^{\pm 1},\dots, t_n^{\pm1}]$ be a Laurent polynomial ring in $n$ commutating variables. Then $\mathfrak g \otimes \mathbb C\langle n\rangle$ has a natural Lie algebra structure called \textit{multiloop Lie $n$-algebra} (associated to $\mathfrak g$) and its universal central extension is what is called a \textit{toroidal Lie $n$-algebra}. The multiloop and the toroidal algebras are both explored independently.

These toroidal Lie algebras are multi variable generalizations of the well known
affine Kac-Moody Lie algebras, which are universal central extensions of the loop algebra. Moody, Rao, and Yokonuma introduced the toroidal Lie algebras in \cite{RMY}, which led to constructions of representations of toroidal Lie algebras from vertex operators in works such as \cite{RM,R}. Analogously to the twisted affine Lie algebras, Fu and Jiang \cite{FJ} used diagram automorphisms with finite order of the base Lie algebra  to construct twisted toroidal Lie algebras. The structural aspects and the representation theory of toroidal Lie algebras are replete with interesting nuances when $n\ge 2$ compared to $n=1$, which require different techniques and approaches.   

If, on the one hand, the construction of toroidal algebras is quite natural as a generalization of affine Lie algebras, on the other hand, the twisted toroidal algebras are not so immediate. We basically follow the same approach as in \cite{Ba,BR,BK} using only one diagram automorphism as in the affine Kac-Moody algebras, instead of a set of them as in \cite{CJK,Le}.  The motivation to consider the action of the automorphism on the toroidal Lie algebra comes from the context of affine Kac-Moody algebras where the action on $\mathbb C[t^{\pm 1}]$ is well understood and the action on the base Lie algebra $\mathfrak g$ has the action by diagram automorphisms as the cornerstone as follows: if $\sigma$ is an arbitrary automorphism of $\mathfrak g$ of finite order $m$, then there exists an associated diagram automorphism $\mu$ (where the order of $\mu$ is $1, 2$, or $3$, depending on $\mathfrak g$), and an inner automorphism $\phi$ such that $\sigma = \mu \phi $ and the twisted Lie algebra associated to $\sigma$ is isomorphic to the one associated to $\mu$ (cf. \cite[Proposition 8.1 and Theorem 8.5]{Ka}).  

Another way to view the (twisted) toroidal Lie algebras is as a central extension of (equivariant) \textit{map algebras associated to $\mathfrak g$ and $X=Spec(\mathbb C\langle n \rangle)$} which is the Lie algebra of regular maps $\mathrm X \to \mathfrak g$ with pointwise Lie bracket. We highlight the reference \cite{NSS2} as a great survey on equivariant map algebras.

The theory of integral forms for the universal enveloping algebra of a Lie algebra has been developed since its important role in the development of the theory of Chevalley groups.  From 1955 to 1985, these forms were considered for finite dimensional complex simple Lie algebras, for their nontwisted and twisted analogues, and  for vertex algebras (cf. \cite{G,H,M,Pre}). After a 25-year hiatus, it has entered in the theory of Lie superalgebras (cf. \cite{BaC}), the Onsager algebra (cf. \cite{BiC1}), and even in the theory of map algebras (cf. \cite{C}), besides some computational simplifications still in the context of low rank twisted affine Kac-Moody algebras (cf. \cite{ DP}). 

Integral bases are bases whose $\mathbb Z$-span is the integral form. Their construction depends on straightening
identities in the universal enveloping algebra which are the main tool to write elements in Poincar´e-Birkhoff-Witt (PBW) order. One application of integral forms is that they allow one to study representation theory in positive characteristic through what is called a \textit{hyperalgebra}. This was first done in \cite{JM} and extended to several contexts in \cite{BiC2,BiC3,BMa,BMM} with both present authors figuring in.  

Putting these ideas together, we focus on the construction of integral forms for universal enveloping algebras of the twisted multiloop algebras constructed from the action of a diagram automorphism. Further, we explicit an integral basis for this integral form and straightening formulas for products of basis elements for this integral form.

Some natural consequences of this work include the study of the behavior of different automorphism actions and the representations of these twisted multiloop algebras over a field of positive characteristic.

\

\noindent \textbf{Organization of the paper:} Section \ref{pre} is dedicated to the algebraic preliminaries, including a review of diagram automorphisms, Chevalley basis, and the construction of the main object of this paper. In Section \ref{main}, we construct integral forms and an integral bases for these algebras. 

\

\tableofcontents

\section{Preliminaries} \label{pre}

Throughout this work, $\mathbb C$ denote the set of complex numbers and $\mathbb Z$ (respectively $\mathbb Z_+$ and $\mathbb N$) is the set of integers (respectively non-negative and positive integers).

\subsection{Diagram automorphisms}

Let $\mathfrak g$ be a simple complex finite-dimensional Lie algebra. Fix a Cartan subalgebra, $\mathfrak h$, a corresponding triangular decomposition $\mathfrak g=\mathfrak n^-\oplus\mathfrak h\oplus\mathfrak n^+$, and denote by $R$ the associated root system and by $R^+$ the set of positive roots. Let $I$ be the set of vertices of the Dynkin diagram $\mathfrak g$, and denote by $\{\alpha_i\mid i\in I\}$ the set of simple roots. 

Given $\alpha\in R$, let $\mathfrak g_\alpha$ denote the root space of $\alpha$. For convenience, if $\alpha\notin R$, define $x_\alpha^\pm:=0$.

Let $\mathcal C=\{x_\alpha^\pm,h_i\mid\alpha\in R^+,\ i\in I\}$ be a Chevalley basis for $\mathfrak g$ and define $x_i^\pm:=x_{\alpha_i}^\pm$ and $h_\alpha:=\left[x_\alpha^+,x_\alpha^-\right]$.

Giving a diagram automorphism $\sigma$ of $\mathfrak g$ it is well known that its order is $1,2$ or $3$, according to the type of the Lie algebra $\mathfrak g$. Further, the only types which admit such nontrivial automorphism are the one of types $A, D$, and $E$ (cf. \cite{Ka}). 

In this paper we fix a diagram automorphism $\sigma$ of $\mathfrak g$ and let $k\in\{1, 2,3\}$ be its order. Then $\sigma$ induces a permutation of $R$ as follows
$$\sigma\left(\sum_{i\in I}n_i\alpha_i\right)=\sum_{i\in I}n_i\alpha_{\sigma(i)}.$$
Also, $\sigma$ defines a unique Lie algebra automorphism of $\mathfrak g$, for all $i\in I$, given by $\sigma\left(x_i^\pm\right)=x_{\sigma(i)}^\pm$. Given $\alpha\in R^+$, define $k_\alpha:=\left|\left\{\sigma^j(\alpha)\mid j\in\mathbb Z\right\}\right|$. Then $k_\alpha\in\{1,k\}$. Notice also that, for all $\alpha\in R$, $\sigma\left(\mathfrak g_\alpha\right)=\mathfrak g_{\sigma(\alpha)}$. 

Let $\xi$ be a primitive $k$th root of unity and define $\mathfrak g_\epsilon:=\left\{x\in\mathfrak g\mid\sigma(x)=\xi^\epsilon x\right\}$. Then,
$$\mathfrak g=\bigoplus_{\epsilon=0}^{k-1}\mathfrak g_\epsilon\ \text{ and }\ \left[\mathfrak g_\epsilon,\mathfrak g_{\epsilon'}\right]\subset\mathfrak g_{\overline{\epsilon+\epsilon'}},$$
where $\overline{\epsilon+\epsilon'}$ is the remainder upon dividing $\epsilon+\epsilon'$ by $k$. From now on, by simplicity, we will abuse notation and write $\epsilon$ in place of $\overline{\epsilon}$. Notice that $\mathfrak g_0$ is a simple Lie subalgebra of $\mathfrak g$ and, for all $\epsilon\in\{0,\ldots,k-1\}$, $\mathfrak g_\epsilon$ is a simple $\mathfrak g_0$-module, \cite[Chapter VIII]{Ka}.

Let $\mathfrak a$ be any subalgebra of $\mathfrak g$, $\epsilon\in\{0,\ldots,k-1\}$, and denote $\mathfrak a_\epsilon=\mathfrak a\cap\mathfrak g_\epsilon$. $\mathfrak h_0$ is a Cartan subalgebra of $\mathfrak g_0$ and $\mathfrak g_0=\mathfrak n_0^+\oplus\mathfrak h_0\oplus\mathfrak n_0^-$ is a triangular decomposition of $\mathfrak g_0$. Let $I_0$ be the set of vertices of the Dynkin diagram of $\mathfrak g_0$ and $R_0$ the root system determined by $\mathfrak h_0$. We will denote the simple roots of $\mathfrak g_0$ as $\alpha_i$ for each $i\in I_0$. Let $R_0^+$ denote the set of positive roots and $R_s$ and $R_l$ the subsets of $R_0^+$ made up of the short and long roots of $\mathfrak g_0$, respectively. Define $Q_0$ to be the root lattice and $Q_0^+:=\bigoplus_{i\in I_0}\mathbb Z_+\alpha_i.$

Given $\epsilon\in\{0,\ldots,k-1\}$, let wt$\left(\mathfrak g_\epsilon\right)$ be the set of weights of $\mathfrak g_\epsilon$ as a $\mathfrak g_0$-module. It is well-known that $(\mathfrak g_\epsilon)_0=\mathfrak h_\epsilon$ and, if $\mu\neq0,$ then $(\mathfrak g_\epsilon)_\mu$ is one-dimensional for all $\epsilon.$

\subsection{A Chevalley basis}

We construct a basis for $(\mathfrak g_\epsilon)_\mu$, where $\mu\in\textnormal{wt}(\mathfrak g_\epsilon),$ in terms of the fixed Chevalley basis for $\mathfrak g$. 

It is well known that if $\mu\neq0$, then $\mu=\alpha|_{\mathfrak h_0}$ for some $\alpha\in R$ and $\alpha|_{\mathfrak h_0}=\beta|_{\mathfrak h_0}$ if and only if $\beta=\sigma^j(\alpha)$ for some $j\in\{0,1,\ldots,k-1\}.$

First suppose that $\mathfrak g$ is not of type $A_{2n}$. Then, given $\alpha\in R^+$ and $\epsilon\in\{0,\ldots,k_\alpha-1\}$, define
$$x_{\alpha,\epsilon}^\pm:= \displaystyle\sum_{j=0}^{k_\alpha-1}\xi^{{-}j\epsilon}x_{\sigma^j(\alpha)}^\pm   \text{ and } \ H_{\alpha,\epsilon}:= \displaystyle\sum_{j=0}^{k_\alpha-1}\xi^{{-}j\epsilon}h_{\sigma^j(\alpha)}$$

Then
\begin{equation}\label{grels}
    x_{\sigma(\alpha),\epsilon}^\pm=\xi^{-\epsilon}x_{\alpha,\epsilon}^\pm,\ \ H_{\sigma(\alpha),\epsilon}=\xi^{-\epsilon}H_{\alpha,\epsilon},\ \ x_\alpha^\pm=\frac{1}{k_\alpha}\sum_{\epsilon=0}^{k-1}x_{\alpha,\epsilon}^\pm,\ \text{ and }\  h_\alpha=\frac{1}{k_\alpha}\sum_{\epsilon=0}^{k-1}H_{\alpha,\epsilon.}
\end{equation}

Now assume that $\mathfrak g$ is of type $A_{2n}$. Given $\alpha\in R^+$ and $\epsilon \in \{0,1\}$, define
$$x_{\alpha,\epsilon}^\pm:=\begin{cases}\delta_{1,\epsilon}x_\alpha^\pm, &\text{ if }\alpha =\sigma(\alpha) \\   x_\alpha^\pm+(-1)^\epsilon x_{\sigma(\alpha)}^\pm,&\text{ if }\alpha|_{\mathfrak h_0}\in R_l,\\\sqrt{2}\left(x_\alpha^\pm+(-1)^\epsilon x_{\sigma(\alpha)}^\pm\right),&\text{ if }\alpha|_{\mathfrak h_0}\in R_s,\end{cases}$$
and
$$H_{\alpha,\epsilon}:=\begin{cases}\delta_{0,\epsilon}h_\alpha, &\text{ if }\alpha =\sigma(\alpha) \\  h_\alpha+(-1)^\epsilon h_{\sigma(\alpha)},&\text{ if }\alpha|_{\mathfrak h_0}\in R_l,\\2\left(h_\alpha+(-1)^\epsilon h_{\sigma(\alpha)}\right),&\text{ if }\alpha|_{\mathfrak h_0}\in R_s.\end{cases}$$
Moreover, if $\mathfrak g$ is of type $A_{2n}$ and $\beta+\sigma(\beta)\in R^+$, then $\beta|_{\mathfrak h_0} \in R_{s}$ and $(\beta + \sigma(\beta))|_{\mathfrak h_0} \in 2R_{l}$.  In this case, we have
\begin{equation}\label{s}
x_{\beta+\sigma(\beta),1}^\pm
= \frac{s}{4}[x_{\beta,0}^\pm,x_{\beta,1}^\pm]
= -s[ x_\beta^\pm,x_{\sigma(\beta)}^\pm]
\quad \textup{ for some $s \in \{ -1, 1\}$}.
\end{equation}
Note that similar relations to those in \eqref{grels} also hold in this case.

If $\mu=\alpha|_{\mathfrak h_0}$ is a weight of $\mathfrak g_\epsilon$ then the corresponding weight space $\left(\mathfrak g_\epsilon\right)_{\pm\mu}$ has basis $\left\{x_{\alpha,\epsilon}^\pm\right\}$. Also, $\mathfrak h_\epsilon$ is spanned by  $\left\{H_{\alpha_i,\epsilon}\mid i\in I\right\}$. Choose $O$ as a complete set of representatives of the orbits of the $\sigma$-action on $R^+$. Let $\mu\in\text{wt}\left(\mathfrak g_\epsilon\right)\cap Q_0^+\setminus \{0\}$, then define $x_{\mu,\epsilon}^\pm:=x_{\alpha,\epsilon}^\pm$ where $\alpha\in O$ is such that $\alpha|_{\mathfrak h_0}=\mu$. For convenience, define $x_\mu^\pm:=0$ if $\mu\notin\text{wt}\left(\mathfrak g_\epsilon\right)\cap Q_0^+\setminus \{0\}$. Also, there is an injective map $o:I_0\to I$ such that $\alpha_{o(i)}\in O$ for all $i\in I_0$. Given $i\in I_0$ and $\epsilon\in\{1,\ldots,k-1\}$ define $$h_{i,\epsilon}=\left\{\begin{array}{cc}\frac{1}{2}H_{\alpha_{o(i)},\epsilon},&\text{ if }\mathfrak g\text{ is of type }A_{2n}\text{ and }\alpha_i\in R_s,\\H_{\alpha_{o(i)},\epsilon},&\text{ otherwise}.\end{array}\right.$$

Define
$$\mathcal C^\sigma(O):=\left\{x_{\mu,\epsilon}^\pm,h_{i,\epsilon}\mid\epsilon\in\{0,\ldots,k-1\},\mu\in\text{wt}(\mathfrak g_\epsilon)\cap Q_0^+\setminus \{0\},i\in I_0\right\}\setminus \{0\}.$$
Then $\mathcal C^\sigma(O)$ is a basis for $\mathfrak g$. If $\mathfrak g$ is of type $A_{2n}$, we choose $O$ such that $s=1$ in \eqref{s}. If $\mathfrak g$ is of type $D_4$ and $k=3$ we need to choose $O$ in a more specific way in order to ensure that the brackets of elements of $\mathcal C^\sigma(O)$ are in the $\mathbb Z$-span of elements of $\mathcal C^\sigma(O)$. To that end, let $j\in I$ be the unique vertex fixed by $\sigma$ and choose $i\in I$ such that $\sigma(i)\neq i.$ Define 
$$O_i:=\left\{\alpha\in R^+\mid \sigma(\alpha)=\alpha\right\}\cup\left\{\alpha_i,\alpha_j+\alpha_i,\alpha_j+\sigma(\alpha_i)+\sigma^2(\alpha_i)\right\}.$$
To ease the notational burden, we define $x_{\mu,\epsilon}^\pm:=0$ if $\mu$ is not a weight of $\mathfrak g_\epsilon$. Also, if $\alpha\in O$ is such that $\mu=\alpha|_{\mathfrak h_0}$, define   $$h_{\mu,\epsilon}:=\left\{\begin{array}{cc}\frac{1}{2}H_{\alpha,\epsilon},&\text{ if }\mathfrak g\text{ is of type }A_{2n}\text{ and }\alpha\in R_s,\\H_{\alpha,\epsilon},&\text{ otherwise}.\end{array}\right..$$

The following lemma encodes the bracket information for elements of $\mathcal C^\sigma(O)$.
\begin{lem}\label{brackets}
    Let $\epsilon,\epsilon'\in\{0,\ldots,k-1\}$, $\mu\in R_0^+$, $\nu\in\textnormal{wt}(\mathfrak g_\epsilon)\cap Q_0^+\setminus \{0\}$, and $\eta\in\textnormal{wt}(\mathfrak g_\epsilon)\cap\textnormal{wt}(\mathfrak g_{\epsilon'})\cap Q_0^+\setminus \{0\}$. Then
    \begin{enumerate}
        \item $\left[h_{\mu,0},x_{\nu,\epsilon}^\pm\right]=\pm\nu\left(h_{\mu,0}\right)x_{\nu,\epsilon}^\pm$;
        
        \item $\displaystyle\left[x_{\eta,\epsilon}^+,x_{\eta,\epsilon'}^-\right]=\left\{\begin{array}{cc}2h_{\eta,\overline{\epsilon+\epsilon'}},&\text{if }\eta\in R_s\text{ and }\mathfrak g\text{ is of type }A_{2n},\\\delta_{\epsilon\epsilon',1}h_{\eta/2,0},&\text{if }\eta\in 2R_s\text{ and }\mathfrak g\text{ is of type }A_{2n},\\ h_{\eta,\overline{\epsilon+\epsilon'}},&\text{ otherwise};\end{array}\right.$
        
        \item If $h_{\nu,1}\neq0$, then $\displaystyle\left[h_{\nu,1},x_{\nu,\epsilon}^\pm\right]=\left\{\begin{array}{cc}\pm3x_{\nu,\overline{\epsilon+1}}^\pm,&\text{ if }\nu\in R_s\text{ and }\mathfrak g\text{ is of type }A_{2n},\\\pm2x_{\nu,\overline{\epsilon+1}}^\pm,&\text{ otherwise}.\end{array}\right.$
        
        \item $\left[(\mathfrak g_\epsilon)_\mu,(\mathfrak g_{\epsilon'})_\eta\right]\subseteq(\mathfrak g_{{\epsilon+\epsilon'}})_{\mu+\eta}$.
    \end{enumerate}
\end{lem}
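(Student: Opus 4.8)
The plan is to reduce every bracket to the analogous computation in the fixed Chevalley basis $\mathcal C$ of $\mathfrak g$, where the structure constants are known integers (and the $h_\alpha$-action on root vectors is by Cartan integers), and then track how the $\sigma$-averaging that defines the $x_{\mu,\epsilon}^\pm$ and $H_{\alpha,\epsilon}$ interacts with those constants. Parts (1) and (4) are essentially formal: (4) is immediate from $[\mathfrak g_\epsilon,\mathfrak g_{\epsilon'}]\subseteq\mathfrak g_{\overline{\epsilon+\epsilon'}}$ together with the $\mathfrak g_0$-weight grading, and (1) follows from the fact that $h_{\mu,0}\in\mathfrak h_0$ acts on the weight space $(\mathfrak g_\epsilon)_{\pm\nu}$, which is spanned by $x_{\nu,\epsilon}^\pm$, via the weight $\pm\nu$; the only thing to check is that $h_{\mu,0}$ really is the coweight paired correctly, which is handled by the definition of $h_{\mu,0}$ in terms of $H_{\alpha,0}$ and the relations \eqref{grels}.

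For part (2), I would first dispose of the generic case ("otherwise") by expanding $[x_{\eta,\epsilon}^+,x_{\eta,\epsilon'}^-]$ using $x_{\eta,\epsilon}^\pm=\sum_{j}\xi^{-j\epsilon}x_{\sigma^j(\eta)}^\pm$ (choosing $\eta\in O$). Cross terms $[x_{\sigma^i(\eta)}^+,x_{\sigma^j(\eta)}^-]$ with $i\neq j$ vanish because $\sigma^i(\eta)\neq\sigma^j(\eta)$ and their difference is not a root (distinct roots in one $\sigma$-orbit are orthogonal here, so their difference is never a root), leaving $\sum_j \xi^{-j(\epsilon+\epsilon')}h_{\sigma^j(\eta)}=H_{\eta,\overline{\epsilon+\epsilon'}}=h_{\eta,\overline{\epsilon+\epsilon'}}$. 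For $\mathfrak g$ of type $A_{2n}$ one must instead use the two-case definition of $x_{\eta,\epsilon}^\pm$ there: when $\eta|_{\mathfrak h_0}\in R_s$ the extra factor $\sqrt2$ contributes a $2$, and when $\eta\in 2R_s$ (i.e. $\eta=\beta+\sigma(\beta)$) one invokes \eqref{s}, $x_{\eta,1}^\pm=-s[x_\beta^\pm,x_{\sigma(\beta)}^\pm]$ with $s=1$, and computes $[x_{\eta,1}^+,x_{\eta,1}^-]$ by Jacobi, reducing to brackets of the short-root vectors; the $\delta_{\epsilon\epsilon',1}$ appears because $x_{\eta,0}^\pm=0$ by definition (only $\epsilon=1$ survives). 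The factor $\tfrac12$ built into $h_{i,\epsilon}$/$h_{\mu,\epsilon}$ for short roots in the $A_{2n}$ case is exactly calibrated so the right-hand sides come out as stated. Part (3) is the same kind of computation run one degree up: expand $[H_{\nu,1},x_{\nu,\epsilon}^\pm]$ (using $h_{\nu,1}=H_{\alpha_{o(\nu)},1}$ or its half), note $[h_{\sigma^j(\nu)},x_{\sigma^i(\nu)}^\pm]=\pm\langle\sigma^i(\nu),\sigma^j(\nu)^\vee\rangle x_{\sigma^i(\nu)}^\pm$, and collect; the Cartan pairing of two roots in the same $\sigma$-orbit evaluates to $2$ on the diagonal and $0$ off-diagonal (again by orthogonality within orbits, valid outside $A_{2n}$), producing the $\pm2 x_{\nu,\overline{\epsilon+1}}^\pm$; in the $A_{2n}$ short-root case the $\sqrt2$'s and the factor in $H_{\alpha,1}$ combine to give the coefficient $3$.

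The main obstacle I anticipate is the $A_{2n}$ bookkeeping, and within it the $D_4$, $k=3$ subtlety flagged before the lemma: the whole point of choosing the orbit representatives $O$ (and $O_i$ in the $D_4$ case) carefully is to make the off-orbit brackets $[x_{\mu,\epsilon}^\pm,x_{\nu,\epsilon'}^\pm]$ land in the $\mathbb Z$-span of $\mathcal C^\sigma(O)$ with the advertised coefficients rather than merely in a rescaled version. Concretely, I expect to need a short case analysis of which pairs $(\sigma^i(\mu),\sigma^j(\nu))$ sum to a root, using that a $\sigma$-orbit sum lies in the weight lattice in a controlled way, and to verify that with the normalization $s=1$ and the choice $O_i$ the resulting structure constants are the ones listed. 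This is finite but fiddly; I would organize it by the three rows of the piecewise definitions and treat $A_{2n}$ versus the simply-laced folded cases ($A_{2n-1},D_n,E_6,D_4$) separately throughout.
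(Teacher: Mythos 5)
The paper states this lemma without proof (it is presented as a record of the bracket data, with the underlying computations implicit in the references to Kac and Mitzman), so there is no in-paper argument to compare against; your plan is the standard direct verification and is essentially correct. One small correction to your part (3): in the $A_{2n}$ short-root case the coefficient $3$ does not come from ``the $\sqrt2$'s and the factor in $H_{\alpha,1}$'' --- those normalizations cancel (the $\sqrt2$ appears on both sides of the bracket and the $\tfrac12$ in $h_{\nu,1}$ cancels the $2$ in $H_{\alpha,1}$). It comes from the failure of orthogonality within the $\sigma$-orbit: for $\alpha$ with $\alpha+\sigma(\alpha)\in R$ one has $\langle\alpha,\sigma(\alpha)^\vee\rangle=-1$, so $[h_\alpha-h_{\sigma(\alpha)},x_\alpha^\pm]=\pm(2-(-1))x_\alpha^\pm=\pm3x_\alpha^\pm$. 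You do flag that orthogonality within orbits is ``valid outside $A_{2n}$,'' so carrying out your plan would surface this, but the mechanism should be attributed to the Cartan integer, not the normalizing constants. Everything else --- the vanishing of cross terms $[x_{\sigma^i(\alpha)}^+,x_{\sigma^j(\alpha)}^-]$ because orbit differences are never roots, the $\delta_{\epsilon\epsilon',1}$ arising from $x_{\alpha,0}^\pm=0$ for $\sigma$-fixed $\alpha$, and the role of the choices $s=1$ and $O_i$ --- is correctly identified.
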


\begin{rem}
    If $\mathfrak g$ is not of type $A_{2n}$, then $\mathcal C_0^\sigma(O):=\left\{x_{\mu,0}^\pm,h_{i,0}\mid\mu\in R_0^+,i\in I_0\right\}$ is a Chevalley basis for $\mathfrak g_0$. If $\mathfrak g$ is of type $A_{2n}$, then we replace $h_{j,0}\in\mathcal C_0^\sigma(O)$ where $j\in I_0$ is such that $\alpha_j\in R_s$, by $2h_{j,0}$. 
\end{rem}

\subsection{Twisted multiloop algebras}

Fix $m\in\mathbb N$ and define $\mathbb C\langle m\rangle=\mathbb C\left[t_1^{\pm 1},\ldots,t_m^{\pm 1}\right]$ to be the multivariable Laurent polynomials. Given $\mathbf r\in\mathbb Z^m$ define $t^{\mathbf r}:=t_1^{r_1}\dots t_m^{r_m}\in\mathbb C\langle m\rangle$. The (untwisted) multiloop algebra of $\mathfrak{g}$ is $\mathfrak{g}\otimes\mathbb C\langle m\rangle$ with bracket given by linearly extending the bracket
$$\left[x\otimes f,y\otimes g\right]=[x,y]_{\mathfrak g}\otimes fg,\ \forall x,y\in\mathfrak g,\ f,g,\in\mathbb C\langle m\rangle.$$
Given any subalgebra $\mathfrak a\subset\mathfrak g$, define $\mathfrak a\langle m\rangle:=\mathfrak a\otimes\mathbb C\langle m\rangle$. Then 
$$\mathfrak g\langle m\rangle=\mathfrak n^-\langle m\rangle\oplus\mathfrak h\langle m\rangle\oplus\mathfrak n^+\langle m\rangle.$$

Let $\xi$ be a primitive $k$th root of unity. Then $\sigma$ also acts on $\mathbb C\langle m\rangle$ via $\sigma\left(f\left(t_1,\ldots,t_m\right)\right)=f\left(\xi^{-1}t_1,\ldots,t_m\right)$. 
These two group actions give rise to an action of $\sigma$ on the (untwisted) multiloop algebra of $\mathfrak{g}$, $\mathfrak{g}\langle m\rangle$, via $\sigma(x\otimes f)=\sigma(x)\otimes\sigma(f)$, for all $x\in\mathfrak{g}$ and $f\in\mathbb C\langle m\rangle$. The \textbf{twisted multiloop algebra of $\mathfrak{g}$}, $\mathcal T_m^\sigma(\mathfrak{g})$, is the subalgebra of $\mathfrak g\langle m\rangle$, which is fixed by this action of $\sigma$. 

\begin{rem}
    Note that, in the case $\sigma=1$, $\mathcal T_m^\sigma(\mathfrak{g})=\mathfrak g\langle m\rangle$ and the integral bases in this case have been done in \cite{C}. Given this, we will assume going forward that $k\in\{2,3\}$ i.e. $\sigma$ is nontrivial.
\end{rem}

Given any Lie subalgebra $\mathfrak a\subset\mathfrak g$, define $\mathcal T_{m,\epsilon}^\sigma(\mathfrak{a})\subset\mathcal T_{m}^\sigma(\mathfrak{g})$ by 
$$\mathcal T_{m,\epsilon}^\sigma(\mathfrak{a}):=\mathfrak a_\epsilon\otimes t_1^{\epsilon}\mathbb C\left[t_1^{\pm k},t_2^{\pm 1},\ldots,t_m^{\pm 1}\right].$$
Note that 
$$\mathcal T_m^\sigma(\mathfrak{g})=\bigoplus_{\epsilon=0}^{k-1}\mathcal T_{m,\epsilon}^\sigma(\mathfrak{g})\ \text{ and }\ \mathcal T_{m,\epsilon}^\sigma(\mathfrak{g})=\mathcal T_{m,\epsilon}^\sigma(\mathfrak{n}^-)\oplus\mathcal T_{m,\epsilon}^\sigma(\mathfrak{h})\oplus\mathcal T_{m,\epsilon}^\sigma(\mathfrak{n}^+).$$

Given $\mathbf r\in\mathbb Z^m$ and $\mu\in\textnormal{wt}(\mathfrak g_{r_1})\cap Q_0^+\setminus \{0\}$ define 
\begin{equation}\label{newelts}
   x_{\mu,\mathbf r}^{\pm}:=x_{\mu,r_1}^\pm\otimes t^\mathbf r
    \text{ and }\ h_{\mu,\mathbf r}:=h_{\mu,r_1}\otimes t^\mathbf r
\end{equation}
and for $\mu_i = \alpha_i|_{\mathfrak h_0}, i \in I_0$, 
\begin{equation}
   x_{i,\mathbf r}^{\pm}:=x_{\mu_i,r_1}^\pm\otimes t^\mathbf r
    \text{ and }\ h_{i,\mathbf r}:=h_{\mu_i,r_1}\otimes t^\mathbf r
\end{equation}
The set $$B:=\left\{x_{\mu,\mathbf r}^\pm,h_{i,\mathbf r}\mid\mathbf r\in\mathbb Z^m,\mu\in\text{wt}(\mathfrak g_{r_1})\cap Q_0^+\setminus \{0\},i\in I_0\right\}\setminus \{0\}$$
is a basis for $\mathcal T_m^\sigma(\mathfrak g)$.

\subsection{Universal enveloping algebras}

Given a Lie algebra $\mathfrak a$ let $U(\mathfrak a)$ denote its universal enveloping algebra. Given $u\in U(\mathfrak a)$ and $r\in\mathbb Z_+$ define the divided powers and binomial coefficients as follows
$$u^{(r)}=\frac{u^r}{r!}\text{ and }\binom{u}{r}:=\frac{u(u-1)\dots(u-r+1)}{r!}.$$
Define $T^0(\mathfrak a):=\mathbb C$, and for all $j\in\mathbb N$, define $T^j(\mathfrak a):=\mathfrak a^{\otimes j}$, $T(\mathfrak a):=\bigoplus_{j=0}^\infty T^j(\mathfrak a)$, and $T_j(\mathfrak a):=\bigoplus_{k=0}^jT^k(\mathfrak a)$. Then, set $U_j(\mathfrak a)$ to be the image of $T_j(\mathfrak a)$ under the canonical surjection $T(\mathfrak a)\to U(\mathfrak a)$, and for any $u\in U(\mathfrak a)$ \emph{define the degree of $u$} by $$\deg u:=\min_{j}\{u\in U_j(\mathfrak a)\}.$$

The Poincar\'e-Birkhoff-Witt (PBW) Theorem gives the following isomorphisms
\begin{eqnarray*}
    U(\mathfrak g)&\cong&U(\mathfrak n^-)\otimes U(\mathfrak h)\otimes U(\mathfrak n^+)\\
    U(\mathfrak g\langle m\rangle)&\cong&U(\mathfrak n^-\langle m\rangle)\otimes U(\mathfrak h\langle m\rangle)\otimes U(\mathfrak n^+\langle m\rangle).
\end{eqnarray*}

Let $U_{\mathbb Z}\left(\mathcal T^\sigma_m(\mathfrak g)\right)$ be the $\mathbb Z$-subalgebra generated by $$\left\{\left(x_{\mu,\mathbf r}^\pm\right)^{(r)}\mid\mathbf r\in\mathbb Z^m,\ \mu\in\textnormal{wt}(\mathfrak g_{r_1})\cap Q_0^+\setminus \{0\},\ r\in\mathbb Z_+\right\}.$$

This set of generators for the hyperalgebras can be conveniently reduced:
 
\begin{prop} The $\mathbb Z$-subalgebra $U_{\mathbb Z}\left(\mathcal T^\sigma_m(\mathfrak g)\right)$ is generated by
$$\left\{\left(x_{i,\mathbf r}^\pm\right)^{(r)}\mid\mathbf r\in\mathbb Z^m, i \in I_0,\ r\in\mathbb Z_+\right\}.$$

\end{prop}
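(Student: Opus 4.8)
The plan is to show that the divided powers $\left(x_{\mu,\mathbf r}^\pm\right)^{(r)}$ for general $\mu\in\textnormal{wt}(\mathfrak g_{r_1})\cap Q_0^+\setminus\{0\}$ can be built from the simple-root ones $\left(x_{i,\mathbf r}^\pm\right)^{(r)}$ by an induction on the height of $\mu$ as an element of $Q_0^+$. The base case (height one, i.e.\ $\mu=\mu_i=\alpha_i|_{\mathfrak h_0}$ for some $i\in I_0$) is trivial since those elements are already in the reduced generating set. For the inductive step, given $\mu$ of height at least two lying in $\textnormal{wt}(\mathfrak g_{r_1})$, I would choose a simple root $\alpha_i$ of $\mathfrak g_0$ such that $\nu:=\mu-\mu_i$ is still a positive weight of $\mathfrak g_{\epsilon'}$ for an appropriate $\epsilon'$ with $\epsilon'+ (\text{something}) \equiv r_1$; concretely, write $\mathbf r = \mathbf r' + \mathbf r''$ with $r_1' + r_1'' \equiv r_1 \pmod k$ so that $x_{\nu,\mathbf r'}^\pm$ and $x_{i,\mathbf r''}^\pm$ are honest basis elements, and invoke Lemma~\ref{brackets}(4) to guarantee $[x_{\nu,\epsilon'},x_{\mu_i,\epsilon''}]$ is a nonzero multiple of $x_{\mu,r_1}$. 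The existence of such a decomposition $\mu=\nu+\mu_i$ inside the weight set is where the structure theory of $\mathfrak g_\epsilon$ as a simple $\mathfrak g_0$-module (and the explicit orbit representative choices, especially the $D_4$, $k=3$ and $A_{2n}$ cases) must be used carefully.

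\textbf{The key identity} is the standard commutator-to-divided-power formula: in any associative ring, if $[a,[a,b]]=0$ and more generally in the enveloping algebra one has
\[
\bigl[a^{(r)},b\bigr] \;=\; a^{(r-1)}[a,b] \;-\; \bigl(\text{lower-order corrections}\bigr),
\]
and iterating, $[x^{(a)},y^{(b)}]$ expands as a $\mathbb Z$-linear combination of products of divided powers of $x$, $y$, and $[x,y]$ (this is exactly the type of straightening identity referenced in the introduction, e.g.\ the Garland-type lemmas). So I would first establish that $\left(x_{\mu,\mathbf r}^\pm\right)^{(r)}$, up to a sign and the constant from Lemma~\ref{brackets}, equals a $\mathbb Z$-polynomial in $\left(x_{\nu,\mathbf r'}^\pm\right)^{(\bullet)}$, $\left(x_{i,\mathbf r''}^\pm\right)^{(\bullet)}$, and the already-produced generator $\left(x_{\mu,\mathbf r}^\pm\right)^{(\bullet)}$ of \emph{lower} divided power — then a secondary induction on $r$ closes the argument. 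The one subtlety to flag is the normalization constant: in the $A_{2n}$ case equation~\eqref{s} and Lemma~\ref{brackets}(3) introduce factors of $2$, $3$, $4$, or $s=\pm1$, so I must check that the relevant brackets really do produce $x_{\mu,r_1}^\pm$ with coefficient a \emph{unit} in $\mathbb Z$ (i.e.\ $\pm1$) — and indeed the definitions of $x_{\mu,\epsilon}^\pm$ (with the $\sqrt 2$ built in for short roots and the $s=1$ choice of $O$) are precisely arranged so that this holds; the $A_{2n}$ short-root-to-long-root bracket in \eqref{s} gives coefficient $\pm1$ after the chosen normalization.

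\textbf{The main obstacle} I anticipate is the bookkeeping of the grading index together with the twist: unlike the untwisted case in \cite{C}, here a basis element $x_{\mu,\mathbf r}^\pm$ is only defined when $\mu\in\textnormal{wt}(\mathfrak g_{r_1})$, so when I split $\mu=\nu+\mu_i$ I must simultaneously split $\mathbf r$ into a part of first-coordinate-class $\epsilon'$ and a part of class $\epsilon''$ with $\epsilon'+\epsilon''\equiv r_1$, \emph{and} verify $\nu\in\textnormal{wt}(\mathfrak g_{\epsilon'})$ and $\mu_i\in\textnormal{wt}(\mathfrak g_{\epsilon''})$ for \emph{some} admissible choice of $(\epsilon',\epsilon'')$. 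Since $\mu_i$ (for $i\in I_0$) is a weight of $\mathfrak g_\epsilon$ for every $\epsilon$ (the nonzero weight spaces $(\mathfrak g_\epsilon)_\mu$ are one-dimensional and the weight set of each $\mathfrak g_\epsilon$ contains all of $R_0$ appropriately — using that $\mathfrak g_\epsilon$ is a simple $\mathfrak g_0$-module), one can take $\epsilon''$ freely and set $\epsilon' = r_1-\epsilon''$, reducing to: \emph{every} non-simple $\mu\in\textnormal{wt}(\mathfrak g_{r_1})\cap Q_0^+$ can be written $\mu=\nu+\alpha_i$ with $\alpha_i$ simple and $\nu\in\textnormal{wt}(\mathfrak g_{r_1-\epsilon''})\cap Q_0^+$ — a statement about weights of simple modules that follows from standard $\mathfrak{sl}_2$-string arguments, but which needs the careful choice of orbit set $O$ (hence $O_i$ in type $D_4$) to ensure the resulting structure constants land in $\mathbb Z$ rather than merely $\mathbb Q$. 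Once that combinatorial/structural fact is in hand, the rest is the routine double induction sketched above.
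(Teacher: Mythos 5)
Your strategy coincides with the paper's: the published proof is a one-line reduction to Mitzman's Corollary 4.4.12 via ``induction on the height of the associated roots,'' and your double induction (on height of $\mu$, then on the divided power $r$) together with the commutator-to-divided-power straightening identities is exactly the argument being invoked there, so you are not taking a different route --- you are supplying the details the paper omits.

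One concrete error in your justification, though: the claim that $\mu_i$ is a weight of $\mathfrak g_\epsilon$ for \emph{every} $\epsilon$, so that ``one can take $\epsilon''$ freely,'' is false. Take $\mathfrak g$ of type $D_{n+1}$ with $k=2$: then $\mathfrak g_0$ is of type $B_n$ and $\mathfrak g_1$ is its $(2n+1)$-dimensional vector representation, whose nonzero weights are only the short roots $\pm\epsilon_1,\dots,\pm\epsilon_n$. The long simple roots $\alpha_j=\epsilon_j-\epsilon_{j+1}$ ($j<n$) are \emph{not} weights of $\mathfrak g_1$, so $x_{j,\mathbf r}^\pm=0$ whenever $r_1$ is odd and $j<n$, and the corresponding choice of $\epsilon''$ is unavailable. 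What you actually need --- and what is true --- is only that for each non-simple $\mu\in\textnormal{wt}(\mathfrak g_\epsilon)\cap Q_0^+$ there exists \emph{some} admissible pair $(\epsilon',\epsilon'')$ with $\epsilon'+\epsilon''\equiv\epsilon$, $\nu=\mu-\alpha_i\in\textnormal{wt}(\mathfrak g_{\epsilon'})$, $\alpha_i\in\textnormal{wt}(\mathfrak g_{\epsilon''})$, and nonvanishing bracket (e.g.\ for $\mu=\epsilon_j\in\textnormal{wt}(\mathfrak g_1)$ in the example above, put the simple root in the $\epsilon''=0$ component and $\nu=\epsilon_{j+1}$ in $\mathfrak g_1$). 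This follows from the $\mathfrak{sl}_2$-string arguments you mention, but it requires the per-type case analysis rather than the blanket claim; with that repair the rest of your sketch, including the normalization checks in the $A_{2n}$ case, goes through and matches the intended proof.
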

\begin{proof}
    The proof is analogous to \cite[Corollary 4.4.12]{M} by using induction on the height of the associated roots.
\end{proof}

\section{Main Results} \label{main}

Given $\mathbf r\in\mathbb Z^m$, $\mu\in\textnormal{wt}(\mathfrak g_{r_1})\cap Q_0^+\setminus \{0\}$, and $l\in\mathbb Z_+$, define $\Lambda_{\mu,\mathbf r,l}^\sigma\in U(\mathbb T_m^\sigma(\mathfrak g))$ to be the coefficient of $u^l$ in the series 
$$\Lambda_{\mu,\mathbf r}^\sigma(u):=\exp\left(-\sum_{j\geq1}\frac{h_{\mu,j\mathbf r}}{j}u^j\right).$$

Again, for $\mu_i = \alpha_i|_{\mathfrak h_0}, i \in I_0$, we set 
$$\Lambda_{i,\mathbf r}^\sigma := \Lambda_{\mu_i,\mathbf r}^\sigma \text{ and } \Lambda_{i,\mathbf r,l}^\sigma:=\Lambda_{\mu_i,\mathbf r,l}^\sigma .$$

\

The following Proposition is clear.

\begin{prop}\label{prop:repeat}
    For all $\mathbf r,\mathbf s\in\mathbb C\langle m\rangle$, $\mu\in\textnormal{wt}(\mathfrak g_{r_1})\cap Q_0^+\setminus \{0\}$, $\nu\in\textnormal{wt}(\mathfrak g_{s_1})\cap Q_0^+\setminus \{0\}$, and $r,s\in\mathbb Z_+$
    \begin{eqnarray*}
        \Lambda_{\mu,\mathbf r,r}^\sigma\Lambda_{\nu,\mathbf s,s}^\sigma
        &=&\Lambda_{\nu,\mathbf s,s}^\sigma\Lambda_{\mu,\mathbf r,r}^\sigma\\
        \left(x_{\mu,\mathbf r}^\pm\right)^{(r)}\left(x_{\mu,\mathbf r}^\pm\right)^{(s)}
        &=&\binom{r+s}{s}\left(x_{\mu,\mathbf r}^\pm\right)^{(r+s)}
    \end{eqnarray*}
    \hfill\qedsymbol
\end{prop}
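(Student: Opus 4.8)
The plan is to prove the two identities separately, as they are essentially independent. For the second identity, the key observation is that $x_{\mu,\mathbf r}^\pm = x_{\mu,r_1}^\pm\otimes t^{\mathbf r}$ is a single element of the Lie algebra $\mathcal T_m^\sigma(\mathfrak g)$, so inside $U(\mathcal T_m^\sigma(\mathfrak g))$ it generates a commutative polynomial subalgebra $\mathbb C[x_{\mu,\mathbf r}^\pm]$. Within this commutative subalgebra, the identity $(x_{\mu,\mathbf r}^\pm)^{(r)}(x_{\mu,\mathbf r}^\pm)^{(s)} = \binom{r+s}{s}(x_{\mu,\mathbf r}^\pm)^{(r+s)}$ is just the elementary fact that $\frac{u^r}{r!}\cdot\frac{u^s}{s!} = \binom{r+s}{s}\frac{u^{r+s}}{(r+s)!}$, which holds for any element $u$ of an associative algebra and follows directly from $\binom{r+s}{s} = \frac{(r+s)!}{r!\,s!}$. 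No PBW input is needed here.

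For the first identity, the plan is to reduce the commutativity of the coefficients $\Lambda_{\mu,\mathbf r,r}^\sigma$ and $\Lambda_{\nu,\mathbf s,s}^\sigma$ to the commutativity of the generating series $\Lambda_{\mu,\mathbf r}^\sigma(u)$ and $\Lambda_{\nu,\mathbf s}^\sigma(v)$ in the variables $u, v$; extracting the coefficient of $u^r v^s$ on both sides of $\Lambda_{\mu,\mathbf r}^\sigma(u)\Lambda_{\nu,\mathbf s}^\sigma(v) = \Lambda_{\nu,\mathbf s}^\sigma(v)\Lambda_{\mu,\mathbf r}^\sigma(u)$ then yields the claim. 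Since each series is the exponential of a sum $-\sum_{j\geq 1}\frac{h_{\mu,j\mathbf r}}{j}u^j$, it suffices to check that the linear combinations $\sum_{j\geq 1}\frac{h_{\mu,j\mathbf r}}{j}u^j$ and $\sum_{l\geq 1}\frac{h_{\nu,l\mathbf s}}{l}v^l$ commute in the appropriate completion; and that in turn reduces to showing $[h_{\mu,j\mathbf r}, h_{\nu,l\mathbf s}] = 0$ for all $j, l \geq 1$. Now $h_{\mu,j\mathbf r} = h_{\mu, jr_1}\otimes t^{j\mathbf r}$ and $h_{\nu,l\mathbf s} = h_{\nu, ls_1}\otimes t^{l\mathbf s}$ both lie in $\mathcal T_m^\sigma(\mathfrak h)$, and since $h_{\mu,jr_1}, h_{\nu,ls_1}\in\mathfrak h$ commute, $[h_{\mu,j\mathbf r}, h_{\nu,l\mathbf s}] = [h_{\mu,jr_1},h_{\nu,ls_1}]\otimes t^{j\mathbf r + l\mathbf s} = 0$; indeed $\mathcal T_m^\sigma(\mathfrak h)$ is an abelian subalgebra of $\mathcal T_m^\sigma(\mathfrak g)$.

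**Anticipated obstacle.** There is essentially no obstacle — this is why the authors call the proposition "clear." The only point requiring a modicum of care is the formal manipulation of exponentials of infinite sums: one should work in a suitable completion of $U(\mathcal T_m^\sigma(\mathfrak g))$ (or equivalently argue degree-by-degree), noting that the coefficient $\Lambda_{\mu,\mathbf r,r}^\sigma$ of $u^r$ depends only on finitely many of the $h_{\mu,j\mathbf r}$ (those with $j\leq r$), so all computations take place in finitely generated commutative subalgebras and no convergence issues arise. Given that the relevant elements pairwise commute, the exponentials commute term-by-term, and the proof is immediate.
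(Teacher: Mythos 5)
Your proposal is correct and matches the paper's (implicit) reasoning: the paper offers no proof, declaring the proposition clear, and your write-up supplies exactly the standard verification one would expect --- the second identity is the elementary divided-power computation $\frac{u^r}{r!}\cdot\frac{u^s}{s!}=\binom{r+s}{s}\frac{u^{r+s}}{(r+s)!}$ valid for any element of an associative algebra, and the first reduces, coefficient by coefficient in the generating series, to $[h_{\mu,j\mathbf r},h_{\nu,l\mathbf s}]=0$, which holds because all these elements lie in the abelian subalgebra $\mathcal T_m^\sigma(\mathfrak h)$. Your remark that each $\Lambda_{\mu,\mathbf r,r}^\sigma$ involves only the finitely many $h_{\mu,j\mathbf r}$ with $j\le r$ correctly disposes of the only formal subtlety.
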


 The following lemma shows that elements not all elements of the form $\Lambda_{\alpha,\mathbf r,l}$ are $\mathbb Z$-linearly independent: 
 
\begin{lem}\label{f^kintermsoff}
Let $i\in I_0$, $\mathbf r \in \mathbb Z^m$, and $l\in\mathbb Z_+$. Then
\begin{equation*}
	\Lambda_{i,\mathbf r,l}^\sigma \in \mathbb Z[\Lambda_{i,\mathbf s,s }^\sigma \mid \gcd(\mathbf s)=1 \text{ and } s\in\mathbb Z_+ \} 
\end{equation*}

\end{lem}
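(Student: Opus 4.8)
The plan is to identify, for a fixed primitive $\mathbf s$, the family $\{\Lambda_{i,\mathbf s,l}^\sigma\}_l$ with the images of the elementary symmetric functions under a ring homomorphism, and then to recognize $\Lambda_{i,\mathbf r,l}^\sigma$ — when $\mathbf r=d\mathbf s$ with $d\in\mathbb N$ — as the image of the elementary symmetric function evaluated at the $d$-th powers of the same formal variables. This reduces the lemma to the classical integrality statement that $e_l(x_1^d,x_2^d,\dots)\in\mathbb Z[e_1(x),e_2(x),\dots]$.

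Concretely, I would assume $\mathbf r\neq\mathbf 0$, set $d:=\gcd(\mathbf r)\in\mathbb N$ and $\mathbf s:=\mathbf r/d$, so that $\gcd(\mathbf s)=1$, $\mathbf r=d\mathbf s$, and in particular $r_1=ds_1$. Since $\mathfrak h\langle m\rangle$, hence $\mathcal T_m^\sigma(\mathfrak h)$, is abelian, $U(\mathcal T_m^\sigma(\mathfrak h))$ is a commutative ring (compare the first identity of Proposition~\ref{prop:repeat}), so the exponential series below are legitimate and commute with ring homomorphisms. Let $\Phi\colon\mathbb Q[p_1,p_2,\dots]\to U(\mathcal T_m^\sigma(\mathfrak h))$ be the $\mathbb Q$-algebra homomorphism with $\Phi(p_j)=h_{i,j\mathbf s}=h_{\mu_i,js_1}\otimes t^{j\mathbf s}$ (using the paper's convention that $h_{\mu_i,\epsilon}=0$ when $\mu_i$ is not a weight of $\mathfrak g_\epsilon$).

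Treating $p_j$ as the $j$-th power sum, one has in $\mathbb Q[p_1,p_2,\dots][[u]]$ the identity
\begin{equation*}
\sum_{l\geq 0}(-1)^l e_l\,u^l=\exp\!\Big({-}\sum_{j\geq 1}\tfrac{p_j}{j}u^j\Big),
\end{equation*}
with $e_l\in\mathbb Q[p_1,\dots,p_l]$ the elementary symmetric functions. Applying $\Phi$ yields $\sum_l(-1)^l\Phi(e_l)u^l=\Lambda_{i,\mathbf s}^\sigma(u)$, so $\Phi(e_l)=(-1)^l\Lambda_{i,\mathbf s,l}^\sigma$. Next, the substitution $p_j\mapsto p_{jd}$ is a ring endomorphism of $\mathbb Q[p_1,p_2,\dots]$ carrying the series above to $\exp(-\sum_j\frac{p_{jd}}{j}u^j)=\sum_l(-1)^l e_l(x_1^d,x_2^d,\dots)u^l$; composing with $\Phi$ and using $h_{\mu_i,jds_1}\otimes t^{jd\mathbf s}=h_{\mu_i,jr_1}\otimes t^{j\mathbf r}=h_{i,j\mathbf r}$ gives $\Phi\big(e_l(x_1^d,x_2^d,\dots)\big)=(-1)^l\Lambda_{i,\mathbf r,l}^\sigma$. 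Finally, by the fundamental theorem of symmetric functions over $\mathbb Z$ there is $F\in\mathbb Z[X_1,\dots,X_{ld}]$ with $e_l(x_1^d,x_2^d,\dots)=F(e_1,\dots,e_{ld})$; applying $\Phi$ gives
\begin{equation*}
\Lambda_{i,\mathbf r,l}^\sigma=(-1)^l\,F\!\big({-}\Lambda_{i,\mathbf s,1}^\sigma,\ \Lambda_{i,\mathbf s,2}^\sigma,\ \dots,\ (-1)^{ld}\Lambda_{i,\mathbf s,ld}^\sigma\big)\in\mathbb Z\big[\Lambda_{i,\mathbf s,s}^\sigma\mid 1\leq s\leq ld\big],
\end{equation*}
which lies in $\mathbb Z[\Lambda_{i,\mathbf t,t}^\sigma\mid\gcd(\mathbf t)=1,\ t\in\mathbb Z_+]$ because $\gcd(\mathbf s)=1$.

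The step I expect to require the most care is the multidegree bookkeeping behind $h_{i,j\mathbf r}=h_{\mu_i,jds_1}\otimes t^{jd\mathbf s}$: one must simultaneously match the grading index $jr_1\equiv jds_1\pmod k$ and the Laurent exponent $j\mathbf r=jd\mathbf s$, which is exactly what $\mathbf r=d\mathbf s$ provides, and one must check that the possible vanishing of some $h_{\mu_i,\epsilon}$ is harmless — it is, since then $\Phi$ simply annihilates the corresponding $p_j$. (A more computational alternative is an induction on $l$ using Newton's identities between $\Lambda_{i,\mathbf r,l}^\sigma$ and the $h_{i,j\mathbf r}$, but the essential input there is again the integrality furnished by the fundamental theorem of symmetric functions.)
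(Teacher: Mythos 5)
Your argument is correct, and it is genuinely more self-contained than what the paper does: the paper's own proof of Lemma~\ref{f^kintermsoff} is a two-line reduction, citing the rank-one computation in Section~4.4 of Mitzman \cite{M} for $\mathcal T_1^\sigma(\mathfrak g)$ and asserting that the general case follows by substituting $t^{\mathbf r}$ for $t_1$ (in effect, by applying the maps $\lambda_{\mathbf b}$). You instead reprove the statement from scratch by realizing $\Lambda_{i,\mathbf s}^\sigma(u)$ as the image under a homomorphism $\Phi$ of the generating series $\sum_l(-1)^l e_l u^l=\exp(-\sum_j \tfrac{p_j}{j}u^j)$, observing that passing from $\mathbf s$ to $\mathbf r=d\mathbf s$ corresponds to the plethystic substitution $p_j\mapsto p_{jd}$, and invoking the fundamental theorem of symmetric functions over $\mathbb Z$ to get $e_l(x_1^d,x_2^d,\dots)\in\mathbb Z[e_1,\dots,e_{ld}]$. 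This is in fact the same mechanism that underlies Mitzman's computation, so conceptually the two routes coincide; what your version buys is a transparent, citation-free proof whose only inputs are the commutativity of $U(\mathcal T_m^\sigma(\mathfrak h))$ (which holds here precisely because the paper works without a central extension) and classical symmetric function theory, and it handles the twisted degeneracies ($h_{i,j\mathbf s}$ possibly vanishing for certain residues of $js_1$ modulo $k$) uniformly, since $\Phi$ may freely send some $p_j$ to $0$. Your sign bookkeeping $\Phi(e_n)=(-1)^n\Lambda_{i,\mathbf s,n}^\sigma$ and the degree bound $ld$ are both right. Your exclusion of $\mathbf r=\mathbf 0$ is also appropriate: there $\Lambda_{i,\mathbf 0,l}^\sigma=(-1)^l\binom{h_{i,\mathbf 0}}{l}$, which is listed as a separate generator in $M_m^\sigma(\mathfrak g)$ and is not expected to lie in the subring generated by the $\Lambda_{i,\mathbf s,s}^\sigma$ with $\gcd(\mathbf s)=1$, so the lemma must be read with $\mathbf r\neq\mathbf 0$ implicit.
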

\begin{proof}
The lemma is proven in $\mathcal T_1^\sigma(\mathfrak{g})$ for $f=t_1$ as part of Section 4.4 in \cite{M}. We can extend it to the current setting by replacing $t_1$ with $t^\mathbf r$.
\end{proof}

In order to use Mitzman's results we shall need few specific morphisms of $U_{\mathbb Z}\left(\mathcal T^\sigma_m(\mathfrak g)\right)$:

\begin{itemize}
    \item[i)] for each $\mathbf v\in\mathbb Z^m$, let $T_\mathbf v: U_{\mathbb Z}\left(\mathcal T^\sigma_m(\mathfrak g)\right) \to U_{\mathbb Z}\left(\mathcal T^\sigma_m(\mathfrak g)\right)$ be the automorphism defined by $$T_\mathbf v\left(x_{i,\mathbf r}^\pm\right):=x_{i,\mathbf r\mp\mathbf v}^\pm,$$ 
    for all $i\in I_0$ and $\mathbf r\in\mathbb Z^m$. Then $T_\mathbf v\left(h_{i,\mathbf r}\right)=h_{i,\mathbf r}$.

    \item[ii)] let $\Omega: U_{\mathbb Z}\left(\mathcal T^\sigma_m(\mathfrak g)\right) \to U_{\mathbb Z}\left(\mathcal T^\sigma_m(\mathfrak g)\right)$ be the antiautomorphism defined by $$\Omega\left(x_{\mu,\mathbf r}^\pm\right)=x_{\mu,\mathbf r}^\mp,$$ for all $\mathbf r\in\mathbb Z^m$ and $\mu\in\textnormal{wt}(\mathfrak g_{r_1})\cap Q_0^+\setminus \{0\}$. Then $\Omega\left(h_{\mu,\mathbf r}\right)=h_{\mu,\mathbf r}$.

    \item[iii)]  for each $\mathbf b\in\mathbb Z^m$, let $\lambda_\mathbf b: U_{\mathbb Z}\left(\mathcal T^\sigma_m(\mathfrak g)\right) \to U_{\mathbb Z}\left(\mathcal T^\sigma_m(\mathfrak g)\right)$ be the injective homomorphism defined by $$\lambda_\mathbf b\left(x_{\mu,\mathbf r}^\pm\right)=x_{\mu,\mathbf b\mathbf r}^\pm,$$ for all $\mathbf r\in\mathbb Z^m$ and $\mu\in\textnormal{wt}(\mathfrak g_{r_1})\cap Q_0^+\setminus \{0\}$, where the multiplication of vectors is componentwise multiplication. Then $\lambda_\mathbf b\left(h_{\alpha,\mathbf r}\right)=h_{\alpha,\mathbf b\mathbf r}$. This is really just a variable replacement replacing $t_i$ with $t_i^{b_i}$. Note that if $b_1$ is odd, then $\lambda_\mathbf b$ is an automorphism.
\end{itemize}

Now, set  
$$M_m^\sigma (\mathfrak g):=\left\{\left(x_{\mu,\mathbf r}^\pm\right)^{(r)},\ \Lambda_{i,\mathbf s,s}^\sigma,\ \tbinom{h_{i,\mathbf 0}}{r}\mid \mathbf r,\mathbf s\in\mathbb Z^m, \gcd(\mathbf s)=1, \ \mu\in\textnormal{wt}(\mathfrak g_{r_1})\cap Q_0^+\setminus \{0\},\ i\in I_0,\ r,s\in\mathbb Z_+\right\}$$
and define a monomial in $U_{\mathbb Z}(\mathbb T_m^\sigma(\mathfrak g))$ to be any finite product of elements of the set $M_m^\sigma (\mathfrak g)$.

\begin{prop}\label{prop:arrange}
    If $r,s\in\mathbb Z_+$, $i,j\in I_0$, and $\mathbf r,\mathbf s\in\mathbb Z^m$, then
    \begin{eqnarray}
        \left(x_{\alpha_j,\mathbf r}^+\right)^{(r)}\Lambda_{\alpha_i,\mathbf s,s}^\sigma
        &=&\Lambda_{\alpha_i,\mathbf s,s}^\sigma\left(x_{\alpha_j,\mathbf r}^+\right)^{(r)}+P\label{x+Lambda}\\
        \Lambda_{\alpha_i,\mathbf s,s}^\sigma\left(x_{\alpha_j,\mathbf r}^-\right)^{(r)}
        &=&\left(x_{\alpha_j,\mathbf r}^-\right)^{(r)}\Lambda_{\alpha_i,\mathbf s,s}^\sigma+Q\label{Lambdax-}\\
        \left(x_{\alpha_i,\mathbf r}^\pm\right)^{(r)}\left(x_{\alpha_j,\mathbf s}^\pm\right)^{(s)}
        &=&\left(x_{\alpha_j,\mathbf s}^\pm\right)^{(s)}\left(x_{\alpha_i,\mathbf r}^\pm\right)^{(r)}+R\label{x+x+}\\
        \left(x_{\alpha_i,\mathbf r}^+\right)^{(r)}\left(x_{\alpha_i,\mathbf s}^-\right)^{(s)}
        &=&\left(x_{\alpha_i,\mathbf s}^-\right)^{(s)}\left(x_{\alpha_i,\mathbf r}^+\right)^{(r)}+S\label{x+x-}
    \end{eqnarray}
    where $P$, $Q$, $R$, and $S$ are integer linear combinations of monomials with degree less than $r+s$.
\end{prop}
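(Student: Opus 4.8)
The plan is to treat the four identities uniformly as consequences of the corresponding rank-one and rank-two straightening formulas in $U_{\mathbb Z}(\mathcal T_1^\sigma(\mathfrak g))$ established by Mitzman in \cite{M}, transported into the multivariable setting via the morphisms $T_{\mathbf v}$, $\Omega$, and $\lambda_{\mathbf b}$ introduced above, together with Lemma \ref{f^kintermsoff} to re-expand any $\Lambda$-factors that arise with non-coprime index. First I would reduce \eqref{x+x+} and \eqref{x+x-} to the case $\mathbf r,\mathbf s$ both nonzero and, using $T_{\mathbf v}$, further normalize one of the two exponent vectors; the point is that the Lie subalgebra of $\mathcal T_m^\sigma(\mathfrak g)$ generated by the relevant $x^\pm_{\alpha_i,\mathbf r}, x^\pm_{\alpha_j,\mathbf s}$ (for fixed $i,j$) is the image under a suitable $\lambda_{\mathbf b}$ composed with a monomial substitution of a copy of $\mathcal T_1^\sigma(\mathfrak g')$ or $\mathcal T_2^\sigma(\mathfrak g')$ for an appropriate rank-$\le 2$ subalgebra $\mathfrak g'$ of $\mathfrak g_0$-type governed by the bracket data in Lemma \ref{brackets}. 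Since these morphisms preserve $U_{\mathbb Z}$, carry divided powers to divided powers, carry $\Lambda$'s to $\Lambda$'s and $\binom{h}{r}$'s to $\binom{h}{r}$'s, and preserve the degree filtration, it suffices to prove each identity in that smaller algebra, where it is exactly one of Mitzman's straightening relations (Lemma 4.4.x / Corollary 4.4.y of \cite{M}), whose error term is by construction an integer combination of monomials of degree $<r+s$.

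For \eqref{x+Lambda} and \eqref{Lambdax-}, which are antiautomorphic images of each other under $\Omega$ (note $\Omega$ fixes each $h_{\mu,\mathbf r}$, hence fixes every $\Lambda_{i,\mathbf s,s}^\sigma$, and sends $(x^+)^{(r)}\leftrightarrow (x^-)^{(r)}$ while reversing products and preserving degree), it is enough to prove one of them, say \eqref{Lambdax-}. Here I would again pass to the rank-one or rank-two subalgebra attached to $i,j$: when $i=j$ this is the known relation relating $\Lambda_{i,\mathbf s,s}^\sigma$ past $(x_{i,\mathbf r}^-)^{(r)}$ from the twisted $\mathfrak{sl}_2$ (or twisted $A_2$) computation in \cite{M}; when $i\ne j$ one expands $\Lambda_{\alpha_i,\mathbf s,s}^\sigma$ through its defining exponential in the $h_{\mu_i,j\mathbf s}$ and commutes $h_{\mu_i,j\mathbf s}$ past $x^-_{\alpha_j,\mathbf r}$ using part (1) and (3) of Lemma \ref{brackets} (the relevant brackets $[h_{\mu_i,*},x^-_{\alpha_j,*}]$ land back in a single root space, producing again elements of the form $x^-_{\alpha_j,*}$), so that $Q$ comes out as an integer combination of products of one $x^-$-divided-power with $h$-factors of strictly smaller total degree, which can then be rewritten in terms of $\Lambda$'s and $\binom{h}{r}$'s by the standard identity turning power sums of the $h_{\mu,j\mathbf r}$ into the $\Lambda_{\mu,\mathbf r,l}^\sigma$ and Lemma \ref{f^kintermsoff} to restore $\gcd=1$ indices. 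Throughout, integrality of the coefficients in $Q$ (and $P,R,S$) is inherited from the integrality of Mitzman's formulas and from the fact that $\Lambda_{\mu,\mathbf r,l}^\sigma, \binom{h_{\mu,\mathbf 0}}{r}\in U_{\mathbb Z}$.

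The main obstacle I expect is bookkeeping in the type $A_{2n}$ and type $D_4$ (with $k=3$) cases, where the normalizations in the definitions of $x^\pm_{\mu,\epsilon}$, $h_{i,\epsilon}$, and the sign $s$ in \eqref{s} introduce factors of $2$, $\sqrt 2$, or $3$; one must check that after restricting to the chosen orbit representatives $O$ (so that $s=1$, and $O=O_i$ in the $D_4$ case) the structure constants appearing when a long root is a sum of two short roots, and the coefficients $\pm 3$ versus $\pm 2$ in Lemma \ref{brackets}(3), still produce \emph{integer} error terms — equivalently, that the rank-$\le 2$ subalgebras one reduces to are genuinely isomorphic over $\mathbb Z$ (not just over $\mathbb C$) to the twisted loop algebras Mitzman treats. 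This is precisely the reason the particular choice of $O$ was built into the setup, so the verification is a finite case check against the Chevalley-basis relations recorded in Lemma \ref{brackets} and \eqref{grels}–\eqref{s}, carried out type by type.
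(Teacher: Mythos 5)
Your proposal matches the paper's proof in essence: both reduce each identity to Mitzman's one-variable straightening lemmas (Lemma 4.4.1 of \cite{M}) and transport them to the multivariable setting via the substitution morphisms $\lambda_{\mathbf b}$ and shifts $T_{\mathbf v}$, obtaining \eqref{Lambdax-} from \eqref{x+Lambda} by applying $\Omega$ exactly as the paper does. Your additional remarks (invoking Lemma \ref{f^kintermsoff} to restore $\gcd=1$ indices in the error terms, and the type $A_{2n}$/$D_4$ integrality bookkeeping) are sensible refinements of details the paper leaves implicit, but they do not change the route.
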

\begin{proof}
    By a result in Mitzman \cite[Lemma 4.4.1 (viii)]{M}, $$\left(x_{\alpha_j,0}^+\otimes 1\right)^{(r)}\Lambda_{\alpha_i,t,s}^\sigma=\Lambda_{\alpha_i,t,s}^\sigma\left(x_{\alpha_j,0}^+\otimes 1\right)^{(r)}+P'$$
    where $P'$ is a integer linear combination of monomials of degree less than $r+s$. Replacing $t$ with $t_1\cdots t_m$ and using $\left(x_{\alpha_j,\mathbf r}^+\right)^{(r)}\Lambda_{\alpha_i,\mathbf s,s}^\sigma=T_{-\mathbf r}\lambda_{\mathbf s}\left(\left(x_{\alpha_j,0}^+\otimes 1\right)^{(r)}\Lambda_{\alpha_i,\mathbf 1,s}^\sigma\right)$, where $\mathbf 1:=(1,\ldots,1)\in\mathbb Z^m$ gives \eqref{x+Lambda}. Then $\Omega\left(\left(x_{\alpha_j,\mathbf r}^+\right)^{(r)}\Lambda_{\alpha_i,\mathbf s,s}^\sigma\right)=\Lambda_{\alpha_i,\mathbf s,s}^\sigma\left(x_{\alpha_j,\mathbf r}^-\right)^{(r)}$ gives \eqref{Lambdax-}. Again by another result in \cite[Lemma 4.4.1 (v)]{M},
    $$\left(x_{\alpha_i}^\pm\otimes1\right)^{(r)}\left(x_{\alpha_j}^\pm\otimes t\right)^{(s)}=\left(x_{\alpha_j}^\pm\otimes t\right)^{(s)}\left(x_{\alpha_i}^\pm\otimes1\right)^{(r)}+R'$$
    where $R'$ is an integral linear combination of monomials of degree less than $r+s$. Replacing $t$ with $t_1\cdots t_m$ and using    $\left(x_{\alpha_i,\mathbf r}^\pm\right)^{(r)}\left(x_{\alpha_j,\mathbf s}^\pm\right)^{(s)}=T_{\mp\mathbf r}\lambda_{-\mathbf r+\mathbf s}\left(\left(x_{\alpha_i,0}^\pm\right)^{(r)}\left(x_{\alpha_j,\mathbf 1}^\pm\right)^{(s)}\right)$ gives \eqref{x+x+}. Finally, by \cite[Lemma 4.4.1 (iv)]{M}, $$\left(x_{\alpha_i}^+\otimes 1\right)^{(r)}\left(x_{\alpha_i}^-\otimes t\right)^{(s)}=\left(x_{\alpha_i}^-\otimes t\right)^{(s)}\left(x_{\alpha_i}^+\otimes 1\right)^{(r)}+S'$$
    where $S'$ is an integer linear combination of monomials of degree less than $r+s$. Replacing $t$ with $t_1\cdots t_m$ and using $\left(x_{\alpha_i,\mathbf r}^+\right)^{(r)}\left(x_{\alpha_i,\mathbf s}^-\right)^{(s)}=T_{-\mathbf r}\lambda_{\mathbf r+\mathbf s}\left(\left(x_{\alpha_i,0}^+\right)^{(r)}\left(x_{\alpha_i,\mathbf 1}^-\right)^{(s)}\right)$ gives \eqref{x+x-}
\end{proof}

Given an order on the basis $B$ of $\mathcal T_m^\sigma(\mathfrak{g})$ and a PBW monomial with respect to this order, we construct an ordered monomial in the elements of the set
$M_m^\sigma(\mathfrak g)$,
using the correspondence $\left(x_{\mu,\mathbf r}^\pm\right)^{r} \leftrightarrow \left(x_{\mu,\mathbf r}^\pm\right)^{(r)}$, $h_{i,\mathbf 0}^r \leftrightarrow \binom{h_{i,\mathbf 0}}{r}$ and $(h_{i,\mathbf r} )^k \leftrightarrow \Lambda_{i,\mathbf r,k}^\sigma$ for $\mathbf r\ne 0$. The set of ordered monomials thus obtained is a basis of $U(\mathcal T_m^\sigma(\mathfrak{g}))$.

\

The next theorem extends Mitzman's Theorem  \cite{M} (see also \cite{Pre}) to the twisted multiloop algebras.  

\begin{thm} \label{integralforms}
The subalgebra $U_{\mathbb Z}(\mathcal T_m^\sigma(\mathfrak{g}))$ is a free $\mathbb Z$-module and the sets of ordered monomials constructed from $M_m^\sigma(\mathfrak g)$ is a $\mathbb Z$-basis of $U_{\mathbb Z}(\mathcal T_m^\sigma(\mathfrak{g}))$.\end{thm}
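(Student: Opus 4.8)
The plan is to prove the theorem in two halves: (1) that the ordered monomials built from $M_m^\sigma(\mathfrak g)$ span $U_{\mathbb Z}(\mathcal T_m^\sigma(\mathfrak g))$ over $\mathbb Z$, and (2) that they are $\mathbb Z$-linearly independent. Independence will be the easy half: the ordered monomials in the basis $B$ (with the correspondence $(x_{\mu,\mathbf r}^\pm)^{r}\leftrightarrow(x_{\mu,\mathbf r}^\pm)^{(r)}$, $h_{i,\mathbf 0}^r\leftrightarrow\binom{h_{i,\mathbf 0}}{r}$, $h_{i,\mathbf r}^k\leftrightarrow\Lambda_{i,\mathbf r,k}^\sigma$) already form a $\mathbb C$-basis of $U(\mathcal T_m^\sigma(\mathfrak g))$ by the PBW theorem together with the triangular decomposition $\mathcal T_{m,\epsilon}^\sigma(\mathfrak g)=\mathcal T_{m,\epsilon}^\sigma(\mathfrak n^-)\oplus\mathcal T_{m,\epsilon}^\sigma(\mathfrak h)\oplus\mathcal T_{m,\epsilon}^\sigma(\mathfrak n^+)$; a $\mathbb C$-basis is automatically $\mathbb Z$-linearly independent, so the only real content is that these elements lie in $U_{\mathbb Z}(\mathcal T_m^\sigma(\mathfrak g))$. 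For the $(x_{\mu,\mathbf r}^\pm)^{(r)}$ this follows from the reduced generating set Proposition together with the straightening of brackets of $\mathcal C^\sigma(O)$-elements (Lemma \ref{brackets}); for the Cartan elements $\binom{h_{i,\mathbf 0}}{r}$ and $\Lambda_{i,\mathbf s,s}^\sigma$ one uses the standard Kostant-type identities — commuting $(x_i^+)^{(r)}$ past $(x_i^-)^{(r)}$ produces $\binom{h_{i,\mathbf 0}}{r}$ in degree zero and the $\Lambda$'s in higher tensor degree, exactly as in \cite[\S4.4]{M}, now transported by the morphisms $T_\mathbf v$, $\Omega$, $\lambda_\mathbf b$.

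For the spanning half, the strategy is the usual straightening argument. Since $U_{\mathbb Z}(\mathcal T_m^\sigma(\mathfrak g))$ is generated as a $\mathbb Z$-algebra by the divided powers $(x_{i,\mathbf r}^\pm)^{(r)}$, every element is a $\mathbb Z$-linear combination of arbitrary (unordered) products of elements of $M_m^\sigma(\mathfrak g)$ — first one checks that each unordered product of divided powers already lies in the $\mathbb Z$-span of monomials in $M_m^\sigma(\mathfrak g)$ by repeatedly applying Proposition \ref{prop:arrange} and Proposition \ref{prop:repeat}, which convert any product of two divided powers of root vectors, or of a divided power and a $\Lambda$, into an ordered product plus a $\mathbb Z$-combination of monomials of strictly smaller total degree. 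One then induts on the total degree: fixing the degree $N$, any monomial of degree $N$ can be rewritten, by finitely many applications of Proposition \ref{prop:arrange}, as a $\mathbb Z$-linear combination of ordered monomials of degree $N$ plus monomials of degree $<N$, and the latter are handled by the inductive hypothesis. The bookkeeping that makes this terminate is the bound "degree $<r+s$" on the error terms $P,Q,R,S$ in Proposition \ref{prop:arrange}, which plays the role of the Diamond Lemma descent. One also needs Lemma \ref{f^kintermsoff} here, to rewrite any $\Lambda_{i,\mathbf r,l}^\sigma$ with $\gcd(\mathbf r)>1$ in terms of the $\Lambda_{i,\mathbf s,s}^\sigma$ with $\gcd(\mathbf s)=1$ that appear in $M_m^\sigma(\mathfrak g)$, and the divided-power fusion identity $(x_{\mu,\mathbf r}^\pm)^{(r)}(x_{\mu,\mathbf r}^\pm)^{(s)}=\binom{r+s}{s}(x_{\mu,\mathbf r}^\pm)^{(r+s)}$ to collapse repeated equal factors.

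The main obstacle is that Proposition \ref{prop:arrange} only directly handles the generators $x_{\alpha_i,\mathbf r}^\pm$ attached to simple roots $\alpha_i$ of $\mathfrak g_0$, whereas the ordered monomials we want to span are built from $x_{\mu,\mathbf r}^\pm$ for all $\mu\in\mathrm{wt}(\mathfrak g_{r_1})\cap Q_0^+\setminus\{0\}$. Bridging this gap requires expressing each higher root vector $x_{\mu,\mathbf r}^\pm$ (and its divided powers) in terms of iterated brackets of the simple ones — the twisted analogue of the classical fact that $x_\alpha^{(r)}$ for non-simple $\alpha$ lies in $U_{\mathbb Z}$, proved by induction on height — and here one must be careful about the type $A_{2n}$ and type $D_4$ ($k=3$) subtleties, where the structure constants relating brackets of $\mathcal C^\sigma(O)$-elements to $\mathcal C^\sigma(O)$-elements are only integral because of the specific choices of orbit representatives $O$ (and the sign $s=1$ in \eqref{s}); this is precisely what Lemma \ref{brackets} was set up to control. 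Once the divided powers of all root vectors are known to be $\mathbb Z$-integral and expressible through the simple-root generators, the straightening relations of Proposition \ref{prop:arrange} extend to them by applying $T_\mathbf v$ and $\lambda_\mathbf b$ and by induction on height exactly as in \cite[Ch.~4]{M}, and the degree-induction argument above goes through. I would organize the write-up as: a lemma upgrading Proposition \ref{prop:arrange} to arbitrary $\mu,\nu$; the spanning argument by induction on degree; then the independence remark; and conclude freeness since a spanning set that is $\mathbb Z$-linearly independent is a $\mathbb Z$-basis.
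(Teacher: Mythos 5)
Your proposal follows essentially the same route as the paper: $\mathbb Z$-linear independence of the ordered monomials from the PBW theorem, and spanning by induction on degree using the straightening relations of Propositions \ref{prop:repeat} and \ref{prop:arrange} together with Lemma \ref{f^kintermsoff}. You are in fact more careful than the paper's own two-paragraph argument, since you explicitly flag two points it elides --- that the elements $\Lambda_{i,\mathbf s,s}^\sigma$ and $\tbinom{h_{i,\mathbf 0}}{r}$ must be shown to lie in $U_{\mathbb Z}(\mathcal T_m^\sigma(\mathfrak g))$ via the Kostant/Mitzman identities, and that Proposition \ref{prop:arrange} is stated only for simple roots and must be extended to arbitrary $\mu\in\textnormal{wt}(\mathfrak g_{r_1})\cap Q_0^+\setminus\{0\}$ by induction on height using Lemma \ref{brackets} --- both of which are genuinely needed for the argument to close.
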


\begin{proof}
    Let $\mathcal B$ be the set of ordered monomials constructed from $M_m^\sigma (\mathfrak g)$. By PBW--Theorem, $\mathcal B$ is a $\mathbb C$-linearly independent set. Therefore, it is a ${\mathbb Z}$-linearly independent set. 
    
    We proceed by induction on the degree of monomials in $U_{\mathbb Z}(\mathcal T_m^\sigma(\mathfrak{g}))$ to show that the ${\mathbb Z}$-span of $\mathcal B$ is $U_{\mathbb Z}(\mathcal T_m^\sigma(\mathfrak{g}))$: first, by definition, any degree one monomial is in the ${\mathbb Z}$-span of $\mathcal B$. Now, let $m$ be any monomial in $U_{\mathbb Z}(\mathcal T_m^\sigma(\mathfrak{g}))$. If $m\in\mathcal B$, then we are done. If not, then either the factors of $m$ are not in the correct order or $m$ has certain factors as those appearing on the left-hand side of the statements of Proposition \ref{prop:repeat}.
    If the factors of $m$ are not in the correct order, then we rearrange them using Proposition \ref{prop:arrange}, producing ${\mathbb Z}$-linear combinations of monomials in the correct order with (possibly) other monomials of lower degree.
    All these lower degree monomials are in the ${\mathbb Z}$-span of $\mathcal B$ by the induction hypothesis. So, we conclude that $m\in{\mathbb Z}$-span $\mathcal B$. 
    
    Therefore, the ${\mathbb Z}$-span of $\mathcal B$ is $U_{\mathbb Z}(\mathcal T_m^\sigma(\mathfrak{g}))$ and, hence, $\mathcal B$ is an integral basis for $U_{\mathbb Z}(\mathcal T_m^\sigma(\mathfrak{g}))$.
\end{proof}

\end{document}